\newtheorem{definition}{Definition}[section]
\newtheorem{proposition}[definition]{Proposition}
\newcommand{\greydot}{\begin{picture}(20,20)
  \put(0,0){\circle{14}}
  \put(-5,-5){\line(1,1){10}}
  \put(-5,5){\line(1,-1){10}}
  \end{picture}}
\newcommand{\vertexbn}{\begin{picture}(20,20)
  \put(6,-3){\line(1,0){30}}\put(6,3){\line(1,0){30}}
  \put(27,0){\line(-1,1){10}}\put(27,0){\line(-1,-1){10}}
  \end{picture}}
\newcommand{\vertexcn}{\begin{picture}(20,20)
  \put(6,-3){\line(1,0){30}}\put(6,3){\line(1,0){30}}
  \put(17,0){\line(1,-1){10}}\put(17,0){\line(1,1){10}}
  \end{picture}}
\newcommand{\vertexdn}{\begin{picture}(20,40)
  \put(5,5){\line(3,2){19}} \put(5,-5){\line(3,-2){19}}
  \put(31,20){\circle{14}}\put(31,-20){\circle{14}}
  \end{picture}}
\newcommand{\vertexgcn}{\begin{picture}(20,20)
  \put(6,-4){\line(1,0){30}}\put(6,4){\line(1,0){30}}
  \put(17,0){\line(1,1){10}}\put(17,0){\line(1,-1){10}}
  \put(7,0){\line(1,0){28}}
  \end{picture}}
\newcommand{\vertexgbn}{\begin{picture}(20,20)
  \put(6,-4){\line(1,0){30}}\put(6,4){\line(1,0){30}}
  \put(27,0){\line(-1,1){10}}\put(27,0){\line(-1,-1){10}}
  \put(7,0){\line(1,0){28}}
  \end{picture}}
\newcommand{\vertexbbn}{\begin{picture}(20,20)
  \put(6,-2){\line(1,0){29}}\put(6,2){\line(1,0){29}}
  \put(3,-6){\line(1,0){35}}\put(3,6){\line(1,0){35}}
  \put(27,0){\line(-1,1){10}}\put(27,0){\line(-1,-1){10}}
  \end{picture}}
\begin{document}
\title{Vogan diagrams of Affine untwisted Kac-Moody superalgebras}
\author{Biswajit Ransingh}
\date{}
\maketitle
\address{Department of Mathematics\\National Institute of Technology\\Rourkela (India)\\email- bransingh@gmail.com}
\begin{abstract}
 This article classifies the Vogan diagram of the affine untwisted Kac Moody superalgebras.
\end{abstract}

\section{Introduction}

Real forms of Lie superalgebras have a growing application in superstring theory, M-theory and other branches of theoretical physics.
 Magic triangle of M-theory by Satake diagram has been obainted by \cite{borcherd:mtheory}. Similarly supergravity 
 theory can be obatined by Vogan diagrams. 
 Symmetric spaces with the connection of real form of affine Kac-Moody algebras already studied by Vogan diagrams.
Our future work will be  in exploring Symetric superspaces of affine Kac-Moody superalgebras using Vogan diagram.

The last two decades shows a gradual advancement in classification of real form of semisimple Lie algebras to
Lie superalgebras by Satake diagrams and Vogan diagrams. Splits Cartan subalgebra based on Satake diagram  where as Vogan diagram based on
maximally compact Cartan subalgebra. Batra developed the Vogan diagram of affine untwisted kac-Moody algebras \cite{batra:affine,batra:vogan}.
Here we extend the notion to superalgebra case.

If $\mathfrak{g}$ is a complex semisimple Lie algebra with Killing form $B$ and Dynkin diagram $D$, its
real forms $\mathfrak{g}_{\mathbb{R}}\subset \mathfrak{g}$ can be characterized by the Cartan involutions
$$\theta:\mathfrak{g}_{\mathbb{R}}\rightarrow\mathfrak{g}_{\mathbb{R}}$$ The bilinear form  $B(.,\theta)$ 
is symmetric negative definite.
The Vogan diagram denoted by  $(p,d)$, where $d$ is a diagram involution on $D$ and $p$ is a painting on the vertices fixed by $d$.
It is extended  to
Vogan superdiagrams on its extended Dynkin diagram\cite{chuah:vsuper}. Here we extend the theory to affine untwisted Kac Moody 
superalgebras. The future version of the article will also contain the Vogan diagram of twisted Kac-Moody superalgebras.

\section{Cartan Involution and Invariant bilinear form}
An involution $\theta$ of a real semisimple Lie algebra $\mathfrak{g}_0$ such that symmetric bilinear form 
$$B_{\theta}(X,Y)=-B(X,\theta Y) $$ is strictly positive definite is called a Cartan involution.

For Contragradient Lie superalgebras there exist a supersymmetric nondegenerate invariant bilinear form on it and defined in \cite{chuah:vsuper} as
$$B_{\theta}(X,Y)=B(\theta X,\theta Y)$$

Let $\mathcal{G}^{1}$ be a complex affine Kac-Moody superalgebra. The uniqueness of $B$ is extended to $\mathcal{G}^{1}$. 
The killing form is unique by when restricted to $\mathcal{G}_{0}$

An involution $\theta$ for affine Kac-Moody superalgebras is defined by taking identity on $t^{m}$
$$\theta(t^{m}\otimes x)=t^{m}\otimes \theta(x)$$ 
$$\theta (c)=c$$ and $$\theta(d)=d$$
We say a real form has Cartan automorphism $\theta$ if $B$ restrict to the Killing form on $t^{m}\otimes X$ where $X\in\mathcal{G}_{0} $ 
and $B_{\theta}$ is symmetric negative definite on $\mathcal{G}^{1}$.
A bilinear supersymmetric invariant form $B^{1}(,)$ can be set up on $\mathcal{G}^{1}$ by the definitions
$$B^{(1)}(t^{j}\otimes X,t^{k}\otimes Y)=\delta^{j+k}B(X,Y)$$
$$B^{(1)}(t^{j}\otimes X,C)=0$$
$$B^{(1)}(t^{j}\otimes X,D)=0$$
$$B^{(1)}(C,C)=0$$
$$B^{(1)}(C,D)=1$$
$$B^{(1)}(D,D)=0$$

\begin{proposition}

 Let $\theta\in$$aut$$_{2,4}(\mathcal{G}^{1})$. There exists a real form $\mathcal{G}^{1}_{\mathbb{R}}$ such that $\theta$ restricts
 to a Cartan automorphism on $\mathcal{G}^{1}_{\mathbb{R}}$.
\end{proposition}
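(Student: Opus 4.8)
The plan is to reduce the statement to the underlying finite-dimensional superalgebra $\mathcal{G}_0$ and then lift the resulting real form along the loop construction. Since every $\theta\in aut_{2,4}(\mathcal{G}^{1})$ is defined by $\theta(t^{m}\otimes x)=t^{m}\otimes\theta(x)$, $\theta(c)=c$ and $\theta(d)=d$, it is completely determined by its restriction $\theta_{\mathrm{fin}}:=\theta|_{\mathcal{G}_0}$, which is itself an automorphism of order $2$ or $4$. First I would fix a compact real form $\mathfrak{u}$ of $\mathcal{G}^{1}$ together with its conjugation $\tau$, chosen so that $X\mapsto B^{(1)}(X,\tau Y)$ is the associated definite Hermitian form away from the hyperbolic plane $\mathbb{C}c\oplus\mathbb{C}d$; the loop grading and the pairing $B^{(1)}(C,D)=1$ then record exactly the isotropic directions on which definiteness cannot be expected.

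Next I would make $\theta$ compatible with the compact form. Following the polar-decomposition argument of Knapp, adapted to the affine setting by Batra \cite{batra:affine}, consider the complex-linear automorphism $\omega=\tau\,\theta\,\tau^{-1}\,\theta^{-1}$. Because $\tau$ and $\theta\tau\theta^{-1}$ are both conjugations of compact forms, $\omega$ is self-adjoint and positive for $B^{(1)}_{\tau}$, so $\omega=\exp D$ for a self-adjoint derivation $D$ and $\omega^{t}=\exp(tD)$ is a one-parameter family of automorphisms. Setting $\varphi=\omega^{1/4}$ and replacing $\mathfrak{u}$ by $\varphi\mathfrak{u}$, equivalently $\theta$ by $\varphi\theta\varphi^{-1}$, one arranges $\theta\tau=\tau\theta$, so that $\theta$ preserves the compact real form. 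Since $\theta$ fixes $c$ and $d$ and acts diagonally in $t$, this entire step takes place inside $\mathcal{G}_0$ and can be imported from the finite-dimensional Vogan superdiagram theory of \cite{chuah:vsuper}.

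With $\theta$ and $\tau$ commuting I would define the real form through the $\theta$-eigenspace decomposition of $\mathfrak{u}$: writing $\mathfrak{u}=(\mathfrak{u}\cap\mathfrak{k})\oplus(\mathfrak{u}\cap\mathfrak{p})$ for the $+1$ and $-1$ eigenspaces of $\theta$ (with the $\pm i$ odd eigendirections handled by $\theta^{2}$ in the order $4$ case), set $\mathcal{G}^{1}_{\mathbb{R}}=(\mathfrak{u}\cap\mathfrak{k})\oplus i(\mathfrak{u}\cap\mathfrak{p})$, extended over $\mathbb{R}[t,t^{-1}]\oplus\mathbb{R}c\oplus\mathbb{R}d$. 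It then remains to check that $B^{(1)}_{\theta}(X,Y)=B^{(1)}(\theta X,\theta Y)$ is symmetric and has the required definiteness on $\mathcal{G}^{1}_{\mathbb{R}}$; this follows componentwise from $B^{(1)}(t^{j}\otimes X,t^{k}\otimes Y)=\delta^{j+k}B(X,Y)$ together with the definiteness of the finite form on $\mathcal{G}_0$, the parity operator $\theta^{2}$ supplying the correct sign on the odd sector.

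The hard part will be the definiteness verification rather than the production of a $\theta$-stable compact form. Two points need care. First, $B^{(1)}$ is genuinely indefinite on the hyperbolic plane $\mathbb{C}c\oplus\mathbb{C}d$ and on the odd sector, so that \emph{negative definite} must be read in the graded sense and modulo the isotropic directions fixed by $\theta$; I would isolate these directions, show that $\theta$ acts as the identity there, and thereby reduce the definiteness claim to $\mathcal{G}_0$. Second, the fractional power $\omega^{1/4}$ and the self-adjointness of $\omega$ must be justified in the infinite-dimensional affine algebra, which I would do level by level, using the finite-dimensionality of each homogeneous component $t^{m}\otimes\mathcal{G}_0$ on which $\omega$ preserves the definite finite form.
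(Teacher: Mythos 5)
Your central step---producing a $\theta$-stable \emph{compact} real form $\mathfrak{u}$ of $\mathcal{G}^{(1)}$ (or of $\mathcal{G}_{0}$) by the Knapp polar-decomposition argument---fails in the super setting, and this is not one of the two technical points you flag at the end but the collapse of the whole route. A simple Lie superalgebra with nonzero odd part has no compact real form and no conjugation $\tau$ making $B(X,\tau Y)$ definite: the invariant form is \emph{super}symmetric, hence skew-symmetric on the odd part, so for an antilinear involution $\tau$ the form $B(X,\tau Y)$ is skew-Hermitian on $\mathcal{G}_{\bar{1}}$ and cannot have a sign there; if one instead takes $\tau$ with $\tau^{2}=-1$ on the odd part (the order-$4$ option), then $\tau$ has no nonzero odd fixed vectors and its fixed set is not a real form at all. (Equivalently: a definite invariant Hermitian form would make the adjoint module unitary, and invariance together with $[x,[x,x]]=0$ for odd $x$ forces $[\mathfrak{u}_{\bar{1}},\mathfrak{u}_{\bar{1}}]=0$, contradicting simplicity.) Consequently $\omega=\tau\theta\tau^{-1}\theta^{-1}$ is not ``self-adjoint and positive'' with respect to any definite form, there is no spectral calculus supplying $\omega^{1/4}$, and the alignment step has nothing to run on. This obstruction is precisely why Cartan automorphisms of superalgebras are permitted order $4$ rather than $2$, and why \cite{chuah:vsuper} does not argue via compact forms. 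Note also the circularity in your fallback: what \cite{chuah:vsuper} provides is exactly the finite-dimensional version of the statement being proved (a real form adapted to a given $\theta\in\mathrm{aut}_{2,4}$), not a compact form to which polar decomposition could be applied; ``importing'' the alignment from there reduces your contribution to the loop extension alone.

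The paper's own proof takes a different and (in outline) workable route: it never invokes a compact form, but reduces to the even part via $\mathcal{G}^{(1)}_{\mathbb{R}}\simeq\mathcal{G}^{(1)}_{\bar{0}\mathbb{R}}\simeq\mathcal{G}_{\bar{0}\mathbb{R}}$, where the classical Cartan decomposition $\mathcal{G}_{\bar{0}}=\mathfrak{k}_{0}\oplus\mathfrak{p}_{0}$ is available, and then propagates the Cartan-automorphism conditions to the odd and loop directions through invariance identities for $B_{\theta}$ (such as $B_{\theta}(X,[Z,Y])=\mp B_{\theta}([Z,X],Y)$ according as $Z\in\mathfrak{k}_{0}$ or $Z\in\mathfrak{p}_{0}$), checking definiteness on basis vectors $t^{j}\otimes X_{i}$. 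Separately, your definiteness verification is also misaligned with the structure of $B^{(1)}$: since $B^{(1)}(t^{j}\otimes X,t^{k}\otimes Y)=\delta^{j+k}B(X,Y)$, the form pairs level $j$ with level $-j$, so \emph{every} element $t^{j}\otimes X$ with $j\neq0$ is isotropic---the isotropy is not confined to $\mathbb{C}c\oplus\mathbb{C}d$ as you assert. For the same reason a level-preserving $\tau$ can never yield a definite Hermitian form on the loop algebra (the unitary form of an affine algebra requires $\tau$ to invert levels, $t^{j}\mapsto t^{-j}$), so your ``level by level'' reduction to $\mathcal{G}_{0}$ does not repair the argument; any correct definiteness statement must be formulated through the pairing of opposite levels, which is what the paper's basis-by-basis check is implicitly doing.
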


\begin{proof}

 Since $\theta$ is an $\mathcal{G}^{1}$ automorphism, it preserves $B$. namely
 $$B(X,Y)=B(\theta X,\theta Y)$$
 $B_{\theta}(X,Y)=B_{\theta}(Y,X)$, $B_{\theta}(X,Y)=B_{\theta}(\theta X,\theta Y)$, $B_{\theta}(X.\theta X)=0$
$$B_{\theta}(X\otimes t^{m},Y\otimes t^{n})=B_{\theta}(Y\otimes t^{n},X\otimes t^{m})=$$
$$B(X\otimes t^{m},Y\otimes t^{n})=t^{m+n}B(X,Y)$$
 for all $X,Y\in\mathcal{G}_{0}$
 $$B(K,X\otimes t^{k})=B(D,X\otimes t^{k})=B(D,D)=B(K,K)=0$$
 For $z\in L(t,t^{-1})\otimes\mathcal{G}_0$ and $X,Y\in L(t,t^{-1})\otimes\mathcal{G}_1$ 
 $$B_{\theta}(X,[Z,Y])=B(X,[\theta Z,\theta Y])=-B_{\theta}(X,[\theta Z,\theta Y])$$
 $$B_{\theta}(X,[Z,Y])=0$$ $\forall$ $X\in\mathbb{C}c$ or $\mathbb{C}d$  
 
 $\mathcal{G}_{\mathbb{R}}^{(1)}\simeq\mathcal{G}_{\overline{0}\mathbb{R}}^{(1)}\simeq\mathcal{G}_{\overline{0}\mathbb{R}}$.
The above three real forms are isomorphic. So the Cartan decomposition of $\mathcal{G}_{\mathbb{R}}^{(1)}$
are isomorphic

to $\mathcal{G}_{\overline{0}}$. 

$\mathcal{G}_{\overline{0}}=\mathfrak{k}_{0}\oplus\mathfrak{p}_{0}$

$B_{\theta}(X,[Z,Y])=\begin{cases}
\begin{array}{cc}
-B_{\theta}([Z,X],Y) & \mbox{if }Z\in\mathfrak{k}_{0}\\
B_{\theta}([Z,X],Y) & \mbox{if }Z\in\mathfrak{p}_{0}
\end{array}\end{cases}$

We say that a real form of $\mathcal{G}$ has Cartan automorphism $\theta\in$aut$_{2,4}(\mathcal{G})$
if $B$ restricts to the Killing form on $\mathcal{G}_0$ and $B_\theta$ is symmetric negative definite on 
$\mathcal{G}_{\mathbb{R}}$.
 $B_{\theta}(X_{i},X_{j})=\delta_{ij}$. It follows that $B_{\theta}$ negative definite on $L(t,t^{-1})\otimes \mathcal{G}^{(1)}$.
 By $B_{\theta}$ is symmetric bilinear form on $L_{1}$ $\{1\otimes X_{1},1\otimes X_{2},\cdots,d\}$. So it is conclude that $\theta$ is a 
 Cartan automorphism on $\mathcal{G}^{(1)}$.
\end{proof}
 \section{Vogan diagram}
A root is real if it takes on real values on $\mathfrak{h}_{0}$ ,(i.e., vanishes on $\mathfrak{a}_{0}$) 
imaginary if it takes on purely imaginary values on $\mathfrak{h}_{0}$ (i.e., vanishes on $\mathfrak{a}_{0}$)
and complex otherwise.
A $\theta$ stable Cartan subalgebra $\mathfrak{h}_{0}=\mathfrak{k}_{0}\oplus\mathfrak{a}_{0}$
is maximally compact if its compact dimension is as large as possible, maximally noncompact if its 
noncompact dimensional is as large as possible.
An imaginary root $\alpha$ compact if $\mathfrak{g}_{\alpha}\subseteq \mathfrak{k}$, noncompact if
$\mathfrak{g}_{\alpha}\subseteq \mathfrak{p}$
Let $\mathfrak{g}_0$ be a real semisimple Lie algebra, Let  $\mathfrak{g}$
be its complexification, let $\theta$ be a Cartan involution, let 
$\mathfrak{g}_{0}=\mathfrak{k}_{0}\oplus\mathfrak{p}_{0}$ be the corresponding Cartan decomposition
A maximally compact $\theta$ stable Cartan subalgebra $\mathfrak{h}_{0}=\mathfrak{k}_{0}\oplus\mathfrak{p}_{0}$
of $\mathfrak{g}_0$ with complexification $\mathfrak{h}=\mathfrak{k}\oplus\mathfrak{p}$ and we let 
$\triangle=\triangle(\mathfrak{g},\mathfrak{h})$ be the set of roots.
Choose a positive system $\triangle^{+}$  for $\triangle$ that takes $i\mathfrak{t}_0$ before $\mathfrak{a}$.
$\theta(\triangle^{+})=\triangle^{+}$\\
$\theta(\mathfrak{h}_{0})=\mathfrak{k}_{0}\oplus(-1)\mathfrak{p}_{0}$. 
Therefore $\theta$ permutes the simple roots. It must fix the simple roots that are imaginary and permute
in 2-cycles the simple roots that are complex.
By the Vogan diagram of the triple $(\mathfrak{g}_{0},\mathfrak{h}_{0},\triangle^{+}))$., we mean the Dynkin diagram of 
$\triangle^{+}$ with the 2 element orbits under $\theta$ so labeled and with the 1-element orbits painted or not, according as
the corresponding imaginary simple root is noncompact or compact.

The uniqueness of Cartan automorphism from Dynkin diagram of $\mathcal{G}_{0}$ to $\mathcal{G}^{(1)}$ proved in \cite{chuah:vsuper}. 
This gives  a straightforward proof of the above theory to affine untwisted Kac-Moody superalgebras cases with the addtion of 
canonical central element $K$ and derivation $D$.
\begin{definition}
 A  Vogan diagram $(p,d)$ on $D$ of $\mathcal{G}^{(1)}$ and one of the following holds:
 \begin{itemize}
  \item [(i)] $\theta$ fixes grey vertices
  \item[(ii)] $\theta$ interchange grey vertices and $\sum_{S}a_\alpha$ is odd.
  \item[(iii)] $\sum_{S}a_\alpha$ is odd
 \end{itemize}
\end{definition}
\begin{proposition}
 Let $\mathcal{G}_{\mathbb{R}}$ be a real form, with Cartan involution $\theta\in $inv$(\mathcal{G}_{\mathbb{R}})$ and Vogan diagram
 $(p,d)$ of $D_{0}$. The following are equivalent
 \begin{itemize}
  \item[(i)] $\theta$ extend to aut$_{2,4}\mathcal{G}^{(1)}$.
  \item[(ii)] $(\mathcal{G}_{\bar{0}\mathbb{R}})$ extend to a real form of $\mathcal{G}^{(1)}$.
  \item [(iii)] $(p,d)$ extend to a Vogan diagram on $D$
 \end{itemize}
\end{proposition}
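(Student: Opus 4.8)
The plan is to prove the three conditions equivalent by a cyclic chain $(i)\Rightarrow(ii)\Rightarrow(iii)\Rightarrow(i)$, leaning throughout on the Proposition of Section~2 (which produces a real form from any $\theta\in\mathrm{aut}_{2,4}(\mathcal{G}^{(1)})$ with $\theta$ a Cartan automorphism) and on the uniqueness of the Cartan automorphism established in \cite{chuah:vsuper}. For $(i)\Rightarrow(ii)$, suppose $\theta$ extends to $\mathrm{aut}_{2,4}(\mathcal{G}^{(1)})$. Since the extension is defined to act as the identity on the loop variable, $\theta(t^{m}\otimes x)=t^{m}\otimes\theta(x)$, and to fix both $c$ and $d$, it preserves the $\mathbb{Z}_2$-grading $\mathcal{G}^{(1)}=\mathcal{G}^{(1)}_{\bar{0}}\oplus\mathcal{G}^{(1)}_{\bar{1}}$ and hence restricts to an involution of the even affine subalgebra $\mathcal{G}^{(1)}_{\bar{0}}$. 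Applying the Section~2 Proposition to this restriction yields a real form of $\mathcal{G}^{(1)}_{\bar{0}}$; using the isomorphism $\mathcal{G}_{\mathbb{R}}^{(1)}\simeq\mathcal{G}_{\bar{0}\mathbb{R}}^{(1)}\simeq\mathcal{G}_{\bar{0}\mathbb{R}}$ recorded earlier, this real form restricts to $\mathcal{G}_{\bar{0}\mathbb{R}}$ on the finite part, which is exactly $(ii)$.

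For $(ii)\Rightarrow(iii)$, a real form of $\mathcal{G}^{(1)}$ carries a Cartan decomposition and a maximally compact $\theta$-stable Cartan subalgebra, with complexification $\mathfrak{h}=\mathfrak{k}\oplus\mathfrak{p}$. By the discussion preceding the Definition, $\theta$ then permutes the simple roots of the affine diagram $D$: it fixes the imaginary simple roots, painted or not according as the corresponding root space lies in $\mathfrak{p}$ or $\mathfrak{k}$, and interchanges the complex ones in $2$-cycles. This is precisely a Vogan diagram on $D$, and forgetting the affine node recovers the diagram involution $d$ and the painting $p$ on $D_{0}$, so the given $(p,d)$ extends.

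For $(iii)\Rightarrow(i)$, given an extension of $(p,d)$ to a Vogan diagram on $D$, I would build the involution node by node from the painting and the diagram involution exactly as in the finite super case of \cite{chuah:vsuper}, and then promote it to the full affine algebra by declaring it the identity on $t^{m}$ and fixing $c$ and $d$. One then checks that the resulting map respects the bracket and the supersymmetric form $B^{(1)}$, and that it has order $2$ or $4$, the order $4$ case arising precisely when the nontrivial diagram involution $d$ is composed with the painting. This places $\theta$ in $\mathrm{aut}_{2,4}(\mathcal{G}^{(1)})$ and closes the cycle.

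The hard part will be the consistent treatment of the affine node $\alpha_{0}=\delta-\theta_{\max}$. Its image under $\theta$ is forced by the action on the highest root $\theta_{\max}$, and whether $\alpha_{0}$ is painted, fixed, or interchanged is governed by whether the corresponding root space sits in $\mathfrak{k}$ or $\mathfrak{p}$. I expect the parity conditions of the preceding Definition---that $\sum_{S}a_{\alpha}$ be odd in cases (ii) and (iii), together with the distinction between $\theta$ fixing versus interchanging the grey vertices---to be exactly the constraints making the extension across $\alpha_{0}$ well defined. Showing that these same conditions are simultaneously the obstruction to each of $(i)$, $(ii)$, and $(iii)$ is where the equivalence genuinely closes, and is the step I would scrutinize most carefully.
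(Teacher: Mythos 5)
Your overall strategy---a cyclic chain $(i)\Rightarrow(ii)\Rightarrow(iii)\Rightarrow(i)$ that reduces to the finite super case of \cite{chuah:vsuper} and promotes everything to the affine algebra by letting $\theta$ act as the identity on the loop variable and fix $c$ and $d$---is in the same spirit as the paper's argument, which is in fact thinner than yours: it records the structure of the odd part of the affine diagram (either two grey vertices of label $1$ or a single grey vertex of label $2$), observes that $\theta$ permutes the weight spaces $L(t,t^{-1})\otimes\mathcal{G}_{\bar{1}}$, and then defers wholesale to the proof of Proposition 2.2 of \cite{chuah:vsuper}. Your identification of the affine node and the parity conditions $\sum_{S}a_{\alpha}$ as the real content is accurate, and your $(iii)\Rightarrow(i)$ sketch matches what that citation is being asked to carry.

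However, your step $(i)\Rightarrow(ii)$ contains a genuine flaw. You restrict the extended $\theta$ to the even affine subalgebra $\mathcal{G}^{(1)}_{\bar{0}}$ and apply the Section~2 Proposition to that restriction, which can only produce a real form of $\mathcal{G}^{(1)}_{\bar{0}}$. But statement (ii) asks for a real form of the full affine \emph{superalgebra} $\mathcal{G}^{(1)}$ extending $\mathcal{G}_{\bar{0}\mathbb{R}}$; a real form of the even part is not one, and the isomorphism chain $\mathcal{G}^{(1)}_{\mathbb{R}}\simeq\mathcal{G}^{(1)}_{\bar{0}\mathbb{R}}\simeq\mathcal{G}_{\bar{0}\mathbb{R}}$ does not repair this, since it identifies Cartan decompositions rather than manufacturing the odd part of a real structure. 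The correct move is to apply the Section~2 Proposition directly to $\theta\in\mathrm{aut}_{2,4}(\mathcal{G}^{(1)})$, whose statement already yields a real form $\mathcal{G}^{(1)}_{\mathbb{R}}$ of the superalgebra on which $\theta$ is a Cartan automorphism; what then still requires an argument (and is absent from your sketch) is compatibility, namely that the intersection of this real form with $1\otimes\mathcal{G}_{\bar{0}}$ is conjugate to the given $\mathcal{G}_{\bar{0}\mathbb{R}}$---this is exactly where the uniqueness of the Cartan automorphism from \cite{chuah:vsuper} must be invoked. A parallel compatibility check is glossed over in your $(ii)\Rightarrow(iii)$: you must show that the Vogan diagram arising from the new maximally compact Cartan subalgebra and positive system restricts on $D_{0}$ to the \emph{given} $(p,d)$, not merely to some Vogan diagram of the same real form.
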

\begin{proof}
Let $S$ be the $d-$ orbits of vertices defined by \cite{Chuah:finite} 
$$S=$$\{vertices painted by p\}$$\cup$$\{white and adjacent 2-element d-orbits\}$$\cup$$\{grey and non adjacent 2-element d-orbits\}$$$$
 Let $D$ be the Dynkin diagram of $\mathcal{G}^{(1)})$ of simple root system
 $\Phi\cup\phi$($\Phi$ simple root system with $\phi$ lowest root) with $D=D_{\bar{0}}+D_{\bar{1}}$, where $D_{\bar {0}}$
 and $D_{\bar{1}}$ are respectively the white and grey vertices. The numerical label of the diagram shows $\sum_{\alpha\in D_{\bar{1}}}=2$ has either
 two grey vertices with label 1 or one grey vertex with label 2.
 
 \begin{itemize}
  \item[(i)] $D_{\bar{1}}=\{\gamma,\delta\}$ so the labelling of the odd vertices are 1.
  \item[(ii)] $D_{\bar{1}}=\{\gamma\}$ so labelling is 2 $( a_{\alpha}=2)$ on odd vertex.
 \end{itemize}
  $\theta\in $inv$(\mathcal{G}_{\mathbb{R}})$;  
 $\theta$ permutes the weightspaces $L(t,t^{-1})\otimes\mathcal{G}_{\bar{1}}$
 The rest part of proof of the proposition is followed the proof of the propostion 2.2 of \cite{chuah:vsuper}
 \end{proof}

\section{Affine Kac-Moody superalgebras}

   Let a finite and countable set $I = \{ 1,\dots,r \}$ with  $\tau\subset I$. To a given generalized
  Cartan matrix $A$ and subset $\tau$, there exist a  Lie superalgebra $\bar{\mathcal{G}}(A,\tau)$  
   with the following set of  relations
  \begin{eqnarray*}
    && [ h_i,h_j ] = 0  \\
    && [e_{i},f_{j}]=\delta_{ij}h_{i}\\
    && [ h_i,e_j ] =  a_{ij} e_j \\
    && [ h_i,f_j ] =  -a_{ij} f_j \\
    && deg (h_{i})=deg (f_{i})=\bar{1}  if\, i\in\tau\\
    && deg (h_{i})=deg (f_{i})=\bar{0}  if\, i\notin\tau 
   \end{eqnarray*}
   Let $e_{ij}=(ad e_{i})^{1-\frac{2a_{ij}}{a_{ii}}}e_j$ and $f_{ij}=(ad f_{i})^{1-\frac{2a_{ij}}{a_{ii}}}f_j$
   
  We have the triangular decomposition of 
  $$\bar{\mathcal{G}}(A,\tau)=N^{-}_{f_{i}}\oplus H_{h_{i}}\oplus N^{+}_{f_{i}}$$ 
  Let the ideal of $N^{-}_{f_{i}}$ generated by $[f_{i},f_{j}]$ is $R^{-}$ and the ideal
  generated $[e_{i},e_{j}]$ by $N^{+}_{f_{i}}$ is $R^{+}$ such that $a_{ij}=0$ and all the 
 $f_{ij}$ and  $e_{ij}$for the former and later respectivily.
  $R=R^{+}\otimes R^{-}$ is an ideal of $\mathcal{G}(A,\tau)$\cite{ray:char}. The quotient  
  $\bar{\mathcal{G}}(A,\tau)/R=\mathcal{G}(A,\tau)$  is called a generalised Kac-Moody superalgebra.
\begin{itemize}
 \item[(a)] $\mathcal{G}^{(1)}$ is an affine Kac-Moody superalgebra if $A$ is indecomposable.
 \item [(b)]There exists a vector $(a_{i})_{i=1}^{m+n}$, with $a_{i}$ all positive such that $A(a_{i})_{i=1}^{m+n}$=0. Then $A$
is called Cartan matrix of affine type. The Affine superalgebra associted with a generalized Cartan matrix of type $X^{1}(m,n)$
is called untwised affine Kac-Moody
superalgebra.
\end{itemize}

\medskip
\subsection{Dynkin diagram associated with a generalised Cartan matrix(GCM)}
The Kac-Moody superalgebra $\mathcal{G}(A,\tau)$ is associated with a Dynkin diagram
according to the following rules. Taking the assumption that $i \in \tau$ if $a_{ii} = 0$.

  From a  $GCM$ $A$ with  each $i$ of the diagonal entries ($a_{ii}$) 2 and $i \notin \tau$ a
    white dot  and $i \in \tau$ a black dot $\bullet$, to each $i$ such that $a_{ii}=0$ and $i \in \tau$ a grey dot $\otimes$.
        The $i$-th and $j$-th roots will be joined by $\zeta_{ij}=\max\Big( |a_{ij}|,|a_{ji}| \Big)$ 
        lines with $|a_{ij} \, a_{ji}| \le4$ and 
        the off digonal entries nonzero
        where for off diagonal entries zero; then the  number of connection lines are $|a_{ij}|=|a_{ji}|$ with
        $|a_{ij}|$ and $|a_{ji}| \le 4$
      
    The  arrows will be added  on the lines connecting the $i$-th and $j$-th dots 
    when $\zeta_{ij} > 1$ and $|a_{ij}| \ne |a_{ji}|$, pointing from $j$ 
    to $i$ if $|a_{ij}| > 1$.
 One can get the different Dynkin diagrams with details in \cite{Frap:affine,frap:hypersu,kac:sup}.

\section{A Realization of Affine Kac-Moody superalgebras}
Let $L=\mathbb{C}[t,t^{1}]$ be an algebra of Laurent polynomial in t. The residue of a Laurent polynomial
$P=\sum_{k\in \mathbb{Z}}c_{k}^k$ (where all but a finite number of $c_k$ are 0) is defined as $Res P=c_{-1}$.
Let $\mathcal{G}$ be a simple Lie superalgebra.
Let $\mathcal{G}$ be a finite dimensional simple Lie superalgebra ($\mathcal{G}\neq gl(n|n)$,), $(.|.)$ be a  nondegenerate 
invariant symmetric bilinear form on $\mathcal{G}$.
The definition of affine untwisted B.S.A. $\mathcal{G}^{(1)}$ follows that of affine algebras, i.e.
$\mathcal{G}^{(1)}$ is the loop algebra constructed from $\mathcal{G}$.
Define an infinite dimensional superalgebra $\mathcal{G}^{(1)}$
as $\mathcal{G}\otimes\mathbb{C}[t,t^{-1}]\oplus\mathbb{C}D\oplus\mathbb{C}K$
here $D,K$ are even elements and bracket is defined by
$$[X\otimes t^{k},Y\otimes t^{l}]=[X,Y]\otimes t^{k+l}+k\delta_{k,-l}(X,Y)K,$$
$$[D,K]=0$$, $$[D,X\otimes t^{k}]=kX\otimes t^{k}$$

Untwisted Affine B.S.A. Properties on the structure of affine Lie superalgebras can also be deduced by
extending the classification of Dynkin diagrams to the affine case. This will in particular allow us
to construct in a diagrammatic way twisted affine superalgebras from untwisted ones. 

A simple root system of an affine B.S.A. $\mathcal{G}^{(1)}$ is obtained from a simple root system B
of $\mathcal{G}$ by adding to it the affine root which project on $B$ as the corresponding lowest root.
The simple root systems of $\mathcal{G}^{(1)}$ are therefore associated to the extended Dynkin diagrams
used to determine the regular subsuperalgebras.

\section{Root of $\mathcal{G}^{1}$}

\begin{itemize}

 \item[(i)] $A^{(1)}(m,n)=spl^{(1)}(m+1,n+1)$

 $\Phi\cup\phi=\{\alpha_{0}=k+\delta_{n+1}-e_{1},\alpha_{1}=e_{1}-e_{2},\cdots,\alpha_{m}=e_{m}-e_{m+1},\alpha_{m+1}=
 e_{m+1}-\delta_{1},\alpha_{m+2}=\delta_{1}-\delta_{2},\cdots,\alpha_{n+m+1}=\delta_{n}-\delta_{n+1}\}$
 
 \item[(ii)] $B^{(1)}(m,n)=osp^{(1)}(2m+1,2n)(m>2)$
 
 $\Phi\cup\phi=\{k-2\delta_{1},\alpha_{1}=\delta_{1}-\delta_{2},\alpha_{2}=\delta_{2}-\delta_{3}
 ,\cdots,\alpha_{n}=\delta_{n}-e_{1},\alpha_{n+1}=e_{1}-e_{2},\alpha_{n+m+1}=e_{m-1}-e_{m},\alpha_{n+m}=e_{m}$
 
\item[(iii)] $D^{(1)}(m+n)=osp^{(1)}(2m,2n)(m>2)$

$\Phi\cup\phi=\{k-2\delta_{1},\alpha_{1}=\delta_{1}-\delta_{2},\alpha_{2}=\delta_{2}-\delta_{3}
 ,\cdots,\alpha_{n}=\delta_{n}-e_{1},\alpha_{n+1}=e_{1}-e_{2},\alpha_{n+m-1}=e_{m-1}-e_{m},\alpha_{n+m-1}=e_{m-1}+e_{m}$
 
\item[(iv)] $C^{(1)}(n)$
 
 $\Phi\cup\phi=\{\alpha_{0}=k-e-\delta_{1},\alpha_{1}=e-\delta_{1},\alpha_{2}=\delta_{1}-\delta_{2},\cdots
 \alpha_{n}=\delta_{n-1}-\delta_{n},\alpha_{n+1}=2\delta_{n-1}\}$
 
 \item[(v)] $D^{(1)}(2,1,\alpha)$
 
 $\Phi\cup\phi=\{\alpha_{0}=k-(e_{1}+e_{2}+e_{3}),e_{1}-e_{2}-e_{3},2e_{2},2e_{3}\}$
 
 \item[(vi)] $F^{(1)}(4)$
 
 $\Phi\cup\phi=\{\alpha_{0}=k-3\delta,\delta+\frac{1}{2}(-e_{1}-e_{2}-e_{3}),e_{3},e_{2}-e_{3},e_{1}-e_{2})\}$
 
 \item[(vii)] $G^{(1)}(3)$
 
 $\Phi\cup\phi=\{\alpha_{0}=k-4\delta,\delta+e_{1},e_{2},e_{3}-e_{2}\}$
\end{itemize}
The Cartan subalgebra of  $\mathcal{G}^{1}$ is $$\mathfrak{h}=\mathring{\mathfrak{h}\oplus\mathbb{C}K\oplus
\mathbb{C}D}$$

\section{Real forms from Vogan diagram of affine untwisted Kac-Moody superalgebras}

 $A^{(1)}(m,n)$
 \begin{displaymath}
 \begin{picture}(80,50) \thicklines
\put(-84,0){\circle{14}} \put(-42,0){\circle{14}}\put(28,0){\greydot}\put(98,0){\circle{14}}\put(140,0){\circle{14}}

 \put(-77,0){\line(1,0){28}} \put(-35,0){\line(1,0){14}}\put(7,0){\line(1,0){14}}\put(35,0){\line(1,0){14}}\put(-21,0){\dottedline{4}(1,0)(28,0)}
 \put(49,0){\dottedline{4}(1,0)(28,0)}\put(77,0)
 {\line(1,0){14}}
 \put(105,0){\line(1,0){28}}
  \put(-84,42){\circle{14}} \put(-42,42){\circle{14}}\put(28,42){\greydot}\put(98,42){\circle{14}}\put(140,42){\circle{14}}
  
 \put(-77,42){\line(1,0){28}}  \put(-35,42){\line(1,0){14}} \put(-21,42){\dottedline{4}(1,0)(28,0)}\put(7,42){\line(1,0){14}}\put(35,42){\line(1,0){14}}
 \put(49,42){\dottedline{4}(1,0)(28,0)}
 \put(77,42){\line(1,0){14}} \put(105,42){\line(1,0){28}}
 
 \put(-84,7){\line(0,1){28}}
 \put(-42,7){\line(0,1){28}}\put(28,7){\line(0,1){28}}\put(98,7){\line(0,1){28}}\put(140,7){\line(0,1){28}}
 
 \put(30,-25){\makebox(0,0){$L(t,t^{1})\otimes (sl(m,\mathbb{R})\oplus sl(n,\mathbb{R})\oplus \mathbb{R})\oplus \mathbb{R}iK\oplus 
 \mathbb{R}iD$}}
  \put(-40,55){\makebox(0,0){$1$}}\put(25,55){\makebox(0,0){$1$}}\put(105,55){\makebox(0,0){$1$}}\put(140,55){\makebox(0,0){$1$}}
 \put(-85,55){\makebox(0,0){$1$}}
 \put(-40,-13){\makebox(0,0){$1$}}\put(25,-13){\makebox(0,0){$1$}}\put(105,-13){\makebox(0,0){$1$}}\put(140,-13){\makebox(0,0){$1$}}
 \put(-85,-13){\makebox(0,0){$1$}}
 \put(-84,35){\vector( 0, 1){0}}\put(-84,7){\vector( 0,-1){0}}\put(28,35){\vector( 0, 1){0}}\put(28,7){\vector( 0,-1){0}}
 \put(-42,35){\vector( 0, 1){0}}\put(-42,7){\vector( 0,-1){0}} \put(98,35){\vector( 0, 1){0}}\put(98,7){\vector( 0,-1){0}}
  \put(140,35){\vector( 0, 1){0}}\put(140,7){\vector( 0,-1){0}}
\end{picture}
   \end{displaymath}
   
   \vspace{1cm}
   \begin{displaymath}
      \begin{picture}(80,50) \thicklines
 \put(-115,21){\circle{14}}\put(-110,26){\line(3,2){20}}\put(-110,17){\line(3,-2){20}}
 \put(171,21){\circle{14}}\put(166,26){\line(-3,2){20}}\put(166,17){\line(-3,-2){20}}
\put(-84,0){\circle{14}} \put(-42,0){\circle{14}}\put(28,0){\greydot}\put(98,0){\circle{14}}\put(140,0){\circle{14}}

 \put(-77,0){\line(1,0){28}} \put(-35,0){\line(1,0){14}}\put(7,0){\line(1,0){14}}\put(35,0){\line(1,0){14}}\put(-21,0){\dottedline{4}(1,0)(28,0)}
 \put(49,0){\dottedline{4}(1,0)(28,0)}\put(77,0)
 {\line(1,0){14}}
 \put(105,0){\line(1,0){28}}
  \put(-84,42){\circle{14}} \put(-42,42){\circle{14}}\put(28,42){\greydot}\put(98,42){\circle{14}}\put(140,42){\circle{14}}
  
 \put(-77,42){\line(1,0){28}}  \put(-35,42){\line(1,0){14}} \put(-21,42){\dottedline{4}(1,0)(28,0)}\put(7,42){\line(1,0){14}}\put(35,42){\line(1,0){14}}
 \put(49,42){\dottedline{4}(1,0)(28,0)}
 \put(77,42){\line(1,0){14}} \put(105,42){\line(1,0){28}}
 
 \put(-84,7){\line(0,1){28}}
 \put(-42,7){\line(0,1){28}}\put(28,7){\line(0,1){28}}\put(98,7){\line(0,1){28}}\put(140,7){\line(0,1){28}}
  \put(30,-20){\makebox(0,0){$L(t,t^{1})\otimes (su*(m)\oplus su*(n,\mathbb{R})\oplus \mathbb{R})\oplus \mathbb{R}iK\oplus 
 \mathbb{R}iD$}}
  \put(-84,35){\vector( 0, 1){0}}\put(-84,7){\vector( 0,-1){0}}\put(28,35){\vector( 0, 1){0}}\put(28,7){\vector( 0,-1){0}}
 \put(-42,35){\vector( 0, 1){0}}\put(-42,7){\vector( 0,-1){0}} \put(98,35){\vector( 0, 1){0}}\put(98,7){\vector( 0,-1){0}}
  \put(140,35){\vector( 0, 1){0}}\put(140,7){\vector( 0,-1){0}}
        \end{picture}
   \end{displaymath}
   \vspace{1cm}
   \begin{displaymath}
      \begin{picture}(80,50) \thicklines
 \put(-115,21){\greydot}\put(-110,26){\line(3,2){20}}\put(-110,17){\line(3,-2){20}}
 \put(171,21){\greydot}\put(166,26){\line(-3,2){20}}\put(166,17){\line(-3,-2){20}}
\put(-84,0){\circle{14}} \put(-42,0){\circle{14}}\put(28,0){\circle{14}}\put(98,0){\circle{14}}\put(140,0){\circle{14}}

 \put(-77,0){\line(1,0){28}} \put(-35,0){\line(1,0){14}}\put(7,0){\line(1,0){14}}\put(35,0){\line(1,0){14}}\put(-21,0){\dottedline{4}(1,0)(28,0)}
 \put(49,0){\dottedline{4}(1,0)(28,0)}\put(77,0)
 {\line(1,0){14}}
 \put(105,0){\line(1,0){28}}
  \put(-84,42){\circle{14}} \put(-42,42){\circle{14}}\put(28,42){\circle{14}}\put(98,42){\circle{14}}\put(140,42){\circle{14}}
  
 \put(-77,42){\line(1,0){28}}  \put(-35,42){\line(1,0){14}} \put(-21,42){\dottedline{4}(1,0)(28,0)}\put(7,42){\line(1,0){14}}\put(35,42){\line(1,0){14}}
 \put(49,42){\dottedline{4}(1,0)(28,0)}
 \put(77,42){\line(1,0){14}} \put(105,42){\line(1,0){28}}
 
 \put(-84,7){\line(0,1){28}}
 \put(-42,7){\line(0,1){28}}\put(28,7){\line(0,1){28}}\put(98,7){\line(0,1){28}}\put(140,7){\line(0,1){28}}

 \put(-32,-20){\makebox(0,0){$L(t,t^{1})\otimes (sl(n,\mathbb{C}))\oplus \mathbb{R})\oplus \mathbb{R}iK\oplus 
 \mathbb{R}iD$}}
  \put(-84,35){\vector( 0, 1){0}}\put(-84,7){\vector( 0,-1){0}}\put(28,35){\vector( 0, 1){0}}\put(28,7){\vector( 0,-1){0}}
 \put(-42,35){\vector( 0, 1){0}}\put(-42,7){\vector( 0,-1){0}} \put(98,35){\vector( 0, 1){0}}\put(98,7){\vector( 0,-1){0}}
  \put(140,35){\vector( 0, 1){0}}\put(140,7){\vector( 0,-1){0}}
        \end{picture}
   \end{displaymath}
 \begin{center}

    \begin{displaymath}
    \begin{array}{ccc}
\begin{picture}(80,50) \thicklines 
   \put(-42,0){\circle{14}}   \put(0,0){\circle*{14}}\put(42,42){\greydot}
   \put(42,0){\greydot} \put(84,0){\circle*{14}} \put(126,0){\circle{14}} 
   
   \put(-39,5){\line(2,1){73}}  
    \put(-35,0){\line(1,0){28}}  \put(7,0){\dottedline{4}(1,0)(28,0)} \put(49,0){\dottedline{4}(1,0)(28,0)} \put(91,0){\line(1,0){28}} 
    
    \put(121,5){\line(-2,1){73}} 
      \put(42,20){\makebox(0,0){}}
      \put(30,-20){\makebox(0,0){$L(t,t^{-1})\otimes (su(p,m-p,\mathbb{R})\oplus su(r,n-r,\mathbb{R})\oplus i\mathbb{R})
      \oplus \mathbb{R}ic\oplus\mathbb{R}id$}}
      \put(84,-20){\makebox(0,0){}}
         \end{picture}
\end{array}
\end{displaymath}
   \end{center}
   \vspace{2cm}
   
Case $B^{(1)}(m,n)=Osp^{(1)}(2m+1,2n)$

The  below first Vogan diagram which contains the extreme right black painted root is from the original Dynkin diagram color.
\begin{displaymath}
 \begin{picture}(80,50) \thicklines 
 \put(-84,0){\circle*{14}}   \put(-42,0){\circle{14}}   \put(0,0){\circle{14}} \put(42,0){\greydot} \put(84,0){\circle{14}}
 \put(126,0){\circle{14}} \put(168,0){\circle*{14}} 
    \put(-84,0){\vertexbn}\put(-35,0){\dottedline{4}(1,0)(28,0)}\put(7,0){\line(1,0){28}} \put(49,0){\line(1,0){28}}
    \put(91,0){\dottedline{4}(1,0)(28,0)}
    \put(126,0){\vertexbn}
      \put(0,-20){\makebox(0,0){}} 
      \put(42,-20){\makebox(0,0){}} 
      \put(84,-20){\makebox(0,0){}} 
        \put(-84,14){\makebox(0,0){$1$}}
      \put(-42,14){\makebox(0,0){$2$}}
      \put(0,14){\makebox(0,0){$2$}} 
      \put(42,14){\makebox(0,0){$2$}} 
      \put(84,14){\makebox(0,0){$2$}}
       \put(126,14){\makebox(0,0){$2$}}
       \put(170,14){\makebox(0,0){$2$}}
       \put(42,-40){\makebox(0,0){$L(t,t^{-1})\otimes (sp(m,\mathbb{R})
      \oplus \mathbb{R}ic\oplus\mathbb{R}id$}}
    \end{picture}
\end{displaymath}
\vspace{2cm}

\begin{displaymath}
 \begin{picture}(80,50) \thicklines 
 \put(-84,0){\circle*{14}}   \put(-42,0){\circle{14}}   \put(0,0){\circle{14}} \put(42,0){\greydot} \put(84,0){\circle{14}}
 \put(126,0){\circle{14}} \put(168,0){\circle*{14}} 
    \put(-84,0){\vertexbn}\put(-35,0){\dottedline{4}(1,0)(28,0)}\put(7,0){\line(1,0){28}} \put(49,0){\line(1,0){28}}
    \put(91,0){\dottedline{4}(1,0)(28,0)}
    \put(126,0){\vertexbn}
      \put(0,-20){\makebox(0,0){}} 
      \put(42,-20){\makebox(0,0){}} 
      \put(84,-20){\makebox(0,0){}} 
        \put(-84,14){\makebox(0,0){$1$}}
      \put(-42,14){\makebox(0,0){$2$}}
      \put(0,14){\makebox(0,0){$2$}} 
      \put(42,14){\makebox(0,0){$2$}} 
      \put(84,14){\makebox(0,0){$2$}}
       \put(126,14){\makebox(0,0){$2$}}
       \put(170,14){\makebox(0,0){$2$}}
       \put(42,-40){\makebox(0,0){$L(t,t^{-1})\otimes (sp(m,\mathbb{R})\oplus so(p,q))
      \oplus \mathbb{R}ic\oplus\mathbb{R}id$}}
    \end{picture}
\end{displaymath}
\vspace{2cm}

Case $B^{(1)}(0,n)=Osp^{(1)}(1,2n)$
\begin{center}
\begin{displaymath}
 \begin{picture}(80,50) \thicklines 
 \put(-84,0){\circle{14}}   \put(-42,0){\circle{14}} \put(0,0){\circle{14}} \put(42,0){\circle{14}} 
 \put(84,0){\circle{14}} \put(126,0){\circle{14}}
 \put(168,0){\circle*{14}} 
 
    \put(-84,0){\vertexbn}\put(-35,0){\dottedline{4}(1,0)(28,0)}\put(7,0){\line(1,0){28}} \put(49,0){\line(1,0){28}}
    \put(91,0){\dottedline{4}(1,0)(28,0)}
    \put(126,0){\vertexbn}
    
      \put(0,-20){\makebox(0,0){}} 
      \put(42,-20){\makebox(0,0){}} 
      \put(84,-20){\makebox(0,0){}} 
          \put(-84,14){\makebox(0,0){$1$}}
      \put(-42,14){\makebox(0,0){$2$}}
      \put(0,14){\makebox(0,0){$2$}} 
      \put(42,14){\makebox(0,0){$2$}} 
      \put(84,14){\makebox(0,0){$2$}}
       \put(126,14){\makebox(0,0){$2$}}
       \put(170,14){\makebox(0,0){$2$}}
    \end{picture}
\end{displaymath}

 \end{center}
 
 Case $D^{(1)}(m,n)$
\begin{displaymath}
 \begin{picture}(80,30) \thicklines 
 \put(-84,0){\circle{14}} \put(-42,0){\circle*{14}}   \put(0,0){\circle{14}} \put(42,0){\greydot} \put(84,0){\circle*{14}}
 \put(126,0){\circle{14}}
 
   \put(-84,0){\vertexbn}\put(-35,0){\dottedline{4}(1,0)(28,0)}\put(7,0){\line(1,0){28}} 
   \put(49,0){\line(1,0){28}}\put(91,0){\dottedline{4}(1,0)(28,0)}
   \put(126,0){\vertexdn}
  
      \put(0,-20){\makebox(0,0){}}
      \put(42,-20){\makebox(0,0){}} 
      \put(84,-20){\makebox(0,0){}} 
         \put(-84,14){\makebox(0,0){$1$}}
      \put(-42,14){\makebox(0,0){$2$}}
      \put(0,14){\makebox(0,0){$2$}} 
      \put(42,14){\makebox(0,0){$2$}} 
      \put(84,14){\makebox(0,0){$2$}}
       \put(126,14){\makebox(0,0){$2$}}
       \put(170,-20){\makebox(0,0){$1$}}
       \put(170,20){\makebox(0,0){$1$}}
   \put(0,-32){\makebox(0,0){$L(t,t^{-1})\otimes (sp(r,s)\oplus so^{*}(2p))\oplus \mathbb{R}iC\oplus\mathbb{R}iD$}}     
    \end{picture}
\end{displaymath}
\\

\begin{displaymath}
 \begin{picture}(80,30) \thicklines 
 \put(-84,0){\circle{14}}   \put(-42,0){\circle{14}}   \put(0,0){\circle{14}} \put(42,0){\greydot} \put(84,0){\circle{14}} \put(126,0){\circle{14}}

    \put(-84,0){\vertexbn}\put(-35,0){\dottedline{4}(1,0)(28,0)}\put(7,0){\line(1,0){28}} \put(49,0){\line(1,0){28}}\put(91,0){\dottedline{4}(1,0)(28,0)}
   \put(126,0){\vertexdn}
    \qbezier(176.5, 0)(175, -15)(165, -23)
    \qbezier(176.5, 0)(175, 15)(165, 23)
\put(165,23){\vector( -1, 1){0}}
\put(165,-23){\vector( -1, -1){0}}
      \put(0,-20){\makebox(0,0){}} 
      \put(42,-20){\makebox(0,0){}} 
      \put(84,-20){\makebox(0,0){}} 
         \put(-84,14){\makebox(0,0){$1$}}
      \put(-42,14){\makebox(0,0){$2$}}
      \put(0,14){\makebox(0,0){$2$}} 
      \put(42,14){\makebox(0,0){$2$}} 
      \put(84,14){\makebox(0,0){$2$}}
       \put(126,14){\makebox(0,0){$2$}}
       \put(170,-20){\makebox(0,0){$1$}}
       \put(170,20){\makebox(0,0){$1$}}
    \end{picture}
\end{displaymath}
\\

\begin{displaymath}
 \begin{picture}(80,30) \thicklines 
 \put(-84,0){\circle*{14}}   \put(-42,0){\circle{14}}   \put(0,0){\circle{14}} \put(42,0){\greydot} \put(84,0){\circle{14}}
 \put(126,0){\circle*{14}}

    \put(-84,0){\vertexbn}\put(-35,0){\dottedline{4}(1,0)(28,0)}\put(7,0){\line(1,0){28}} \put(49,0){\line(1,0){28}}
    \put(91,0){\dottedline{4}(1,0)(28,0)}
   \put(126,0){\vertexdn}
    \qbezier(176.5, 0)(175, -15)(165, -23)
    \qbezier(176.5, 0)(175, 15)(165, 23)
\put(165,23){\vector( -1, 1){0}}
\put(165,-23){\vector( -1, -1){0}}
      \put(0,-20){\makebox(0,0){}} 
      \put(42,-20){\makebox(0,0){}} 
      \put(84,-20){\makebox(0,0){}} 
         \put(-84,14){\makebox(0,0){$1$}}
      \put(-42,14){\makebox(0,0){$2$}}
      \put(0,14){\makebox(0,0){$2$}} 
      \put(42,14){\makebox(0,0){$2$}} 
      \put(84,14){\makebox(0,0){$2$}}
       \put(126,14){\makebox(0,0){$2$}}
       \put(170,-20){\makebox(0,0){$1$}}
       \put(170,20){\makebox(0,0){$1$}}
    \end{picture}
    \put(0,-32){\makebox(0,0){$L(t,t^{-1})\otimes (sp(m,\mathbb{R})\oplus so(p,q))\oplus \mathbb{R}iC\oplus\mathbb{R}iD$}}
\end{displaymath}

\bigskip

Real forms of $D^{(1)}(2,1;\alpha)$

\begin{tabular}{cc}

 \begin{picture}(80,50) \thicklines

 \put(0,0){\circle{14}} \put(42,0){\greydot} \put(84,0){\circle{14}}\put(42,-41){\circle{14}}
 
 \put(7,0){\line(1,0){28}} \put(49,0){\line(1,0){28}} \put(42,-6){\line(0,-1){28}}
 \put(0,16){\makebox(0,0){$1$}}\put(42,16){\makebox(0,0){$2$}}\put(84,16){\makebox(0,0){$1$}}\put(42,-57){\makebox(0,0){$1$}}
    \end{picture}
& \hspace{1cm}

 \begin{picture}(80,50) \thicklines

 \put(0,0){\circle{14}} \put(42,0){\greydot} \put(84,0){\circle{14}}\put(42,-41){\circle*{14}}
 
 \put(7,0){\line(1,0){28}} \put(49,0){\line(1,0){28}} \put(42,-6){\line(0,-1){28}}
 \put(0,16){\makebox(0,0){$1$}}\put(42,16){\makebox(0,0){$2$}}\put(84,16){\makebox(0,0){$1$}}\put(42,-57){\makebox(0,0){$1$}}
    \end{picture}
\hspace{1cm}
 \begin{picture}(80,50) \thicklines

 \put(0,0){\circle{14}} \put(42,0){\greydot} \put(84,0){\circle{14}}\put(42,-41){\circle{14}}
 \qbezier(42,25)(21, 24)(0, 10)
 \qbezier(42,25)(63, 24)(84, 10)
 \put(0,10){\vector( -2, -1){0}}
\put(84,10){\vector( 2, -1){0}}
 \put(7,0){\line(1,0){28}} \put(49,0){\line(1,0){28}} \put(42,-6){\line(0,-1){28}}
  \put(0,16){\makebox(0,0){$1$}}\put(42,16){\makebox(0,0){$2$}}\put(84,16){\makebox(0,0){$1$}}\put(42,-57){\makebox(0,0){$1$}}
  \put(0,-70){\mbox{$sl(2,\mathbb{C}\oplus sl(2,\mathbb{R})$}}
  \put(-120,-70){\mbox{$su(2)\oplus su(2)\oplus sl(2,\mathbb{R})$}}
  \put(-250,-70){\mbox{$sl(2,\mathbb{R})\oplus sl(2,\mathbb{R})\oplus sl(2,\mathbb{R})$}}
    \end{picture}
\end{tabular}

\vspace{3cm}

Real forms of $C^{(1)}n$
\begin{displaymath}
 \begin{picture}(80,50) \thicklines
  
  \put(0,0){\circle{14}}\put(42,0){\circle{14}}\put(84,0){\circle*{14}}
    \put(-32,21){\greydot}\put(-32,-21){\greydot}
    
    \put(-5,5){\line(-3,2){21}} \put(-5,-5){\line(-3,-2){21}}\put(7,0){\dottedline{4}(1,0)(28,0)}\put(42,0){\vertexcn}
  \put(-28,-14.5){\line(0,1){30}}\put(-36,-14.5){\line(0,1){30}}
 \put(0,-32){\makebox(0,0){$L(t,t^{-1})\otimes (sp(n,\mathbb{R})\oplus so(2))\oplus \mathbb{R}ic\oplus\mathbb{R}id$}}
 \put(0,15){\makebox(0,0){$2$}}\put(42,15){\makebox(0,0){$2$}}\put(85,15){\makebox(0,0){$1$}}
 \put(-32,32){\makebox(0,0){$1$}}\put(-42,-22){\makebox(0,0){$1$}}
 \end{picture}
\end{displaymath}

\begin{displaymath}
 \begin{picture}(80,50) \thicklines
  
  \put(0,0){\circle*{14}}\put(42,0){\circle{14}}\put(84,0){\circle{14}}
    \put(-32,21){\greydot}\put(-32,-21){\greydot}
    
    \put(-5,5){\line(-3,2){21}} \put(-5,-5){\line(-3,-2){21}}\put(7,0){\dottedline{4}(1,0)(28,0)}\put(42,0){\vertexcn}
  \put(-28,-14.5){\line(0,1){30}}\put(-36,-14.5){\line(0,1){30}}
 \put(0,-32){\makebox(0,0){$L(t,t^{-1})\otimes (sp(r,s)\oplus so(2))\oplus \mathbb{R}ic\oplus\mathbb{R}id$}}
 \end{picture}
\end{displaymath}

\vspace{2cm}
Real forms of $F^{(1)}(4)$
\begin{displaymath}
 \begin{picture}(80,40) \thicklines
   \put(-42,0){\circle{14}}\put(0,0){\greydot}\put(42,0){\circle{14}}\put(84,0){\circle*{14}}
  \put(42,0){\vertexgcn}\put(-42,0){\vertexbbn}\put(6,0){\line(1,0){29}}
  \put(20,-20){\makebox(0,0){$L(t,t^{-1})\otimes (sl(2,\mathbb{R})\oplus g_{c})\oplus \mathbb{R}ic\oplus\mathbb{R}id$}}
 \end{picture}
\end{displaymath}

\begin{displaymath}
 \begin{picture}(80,40) \thicklines
   \put(-42,0){\circle*{14}}\put(0,0){\greydot}\put(42,0){\circle{14}}\put(84,0){\circle{14}}
  \put(42,0){\vertexgcn}\put(-42,0){\vertexbbn}\put(6,0){\line(1,0){29}}
  \put(20,-20){\makebox(0,0){$L(t,t^{-1})\otimes (sl(2,\mathbb{R})\oplus g_{s})\oplus \mathbb{R}ic\oplus\mathbb{R}id$}}
 \end{picture}
\end{displaymath}
\\

Real forms of $G^{(1)}(3)$
\begin{displaymath}
 \begin{picture}(80,40) \thicklines
   \put(-42,0){\circle{14}}\put(0,0){\greydot}\put(42,0){\circle*{14}}\put(84,0){\circle{14}}\put(126,0){\circle{14}}
  \put(-42,0){\vertexgbn}\put(7,0){\line(1,0){28}}\put(42,0){\vertexcn}\put(91,0){\line(1,0){28}}
     \put(-50,-30){\mbox{$L(t,t^{-1})\otimes (su(2,\mathbb{R})\oplus so(1,6))\oplus \mathbb{R}iK\oplus \mathbb{R}iD$}}
 \end{picture}
\end{displaymath}
\begin{displaymath}
 \begin{picture}(80,40) \thicklines
   \put(-42,0){\circle{14}}\put(0,0){\greydot}\put(42,0){\circle{14}}\put(84,0){\circle*{14}}\put(126,0){\circle{14}}
  \put(-42,0){\vertexgbn}\put(7,0){\line(1,0){28}}\put(42,0){\vertexcn}\put(91,0){\line(1,0){28}}
     \put(-50,-30){\mbox{$L(t,t^{-1})\otimes (su(2,\mathbb{R})\oplus so(2,5))\oplus \mathbb{R}iK\oplus \mathbb{R}iD$}}
 \end{picture}
\end{displaymath}

\begin{displaymath}
 \begin{picture}(80,40) \thicklines
   \put(-42,0){\circle*{14}}\put(0,0){\greydot}\put(42,0){\circle{14}}\put(84,0){\circle{14}}\put(126,0){\circle*{14}}
  \put(-42,0){\vertexgbn}\put(7,0){\line(1,0){28}}\put(42,0){\vertexcn}\put(91,0){\line(1,0){28}}
       \put(-50,-30){\mbox{$L(t,t^{-1})\otimes (sl(2,\mathbb{R})\oplus so(3,4))\oplus \mathbb{R}iK\oplus \mathbb{R}iD$}}
 \end{picture}
\end{displaymath}
\begin{displaymath}
 \begin{picture}(80,40) \thicklines
   \put(-42,0){\circle*{14}}\put(0,0){\greydot}\put(42,0){\circle{14}}\put(84,0){\circle{14}}\put(126,0){\circle{14}}
  \put(-42,0){\vertexgbn}\put(7,0){\line(1,0){28}}\put(42,0){\vertexcn}\put(91,0){\line(1,0){28}}
  \put(-50,-30){\mbox{$L(t,t^{-1})\otimes (sl(2,\mathbb{R})\oplus so(7))\oplus \mathbb{R}iK\oplus \mathbb{R}iD$}}
 \end{picture}
\end{displaymath}

\vspace{2cm}
\paragraph{\bf Acknowledgement:}
 Professor Meng Kiat Chuah is gratefully acknowledged for his encouragement and
reading the earlier version of the manuscript.
The author thanks National Board of Higher Mathematics, India (Project
Grant No. 48/3/2008-R\&DII/196-R) for financial support and with P.I. Prof K. C. Pati for guidance and engage me with
the problem of Vogan diagrams.

\end{document}